\newcommand{\mb}[1]{\mathbf{#1}}
\newcommand{\td}[1]{\tilde{#1}}
\newcommand{\mcal}[1]{\mathcal{#1}}
\newcommand{\mbb}[1]{\mathbb{#1}}
\newcommand{\lan}{\left\langle}
\newcommand{\ran}{\right\rangle}
\newcommand{\pd}{\partial}	
\newcommand{\tnsr}[2]{\ensuremath{{#1}_{i_1\dots i_{#2}}}}
\newtheorem{theorem}{Theorem}[section]
\newtheorem{exmp}{Example}[section]
\newtheorem{definition}{Definition}[section]
\theoremstyle{remark}
\newtheorem{remark}{Remark}
\def\equationautorefname~#1\null{ (#1)\null}
\title{Stable boundary conditions for the Hermite Discretization of the Boltzmann Equation in Multi Physical Space Dimensions}
\author{Neeraj Sarna\\Center for Computational Engineering \&\ Department of Mathematics\\RWTH\ Aachen University, Germany\thanks{Mathematics (CCES), Schinkelstr. 2,
52062 Aachen, Germany}\\{\small (\texttt{sarna@mathcces.rwth-aachen.de}
)}}
\date{(2016)}
\date{}
\begin{document}
\maketitle
\begin{abstract}
Any numerical method fails to provide us with acceptable results if not equipped with appropriate boundary conditions. Catering to more realistic applications, in the present article we have extended the work done in \cite{Sarna2017} to the Boltzmann equation involving multi-dimensions in physical and velocity space. Criteria for stable boundary conditions, using energy estimates, have been discussed for linear symmetric hyperbolic initial boundary value problems. Since the use of energy estimates requires the hyperbolic system to be symmetric, the symmetric hyperbolicity of the moment equations arising from a Hermite discretization of the Boltzmann equation has been studied. Furthermore, an algorithm to construct a general symmetrizer for an arbitrary order Hermite discretization has been presented. Similar to \cite{Sarna2017}, a block structure for the multi-dimensional moment equations has been recognised which has been used to construct stable Onsager boundary conditions. The newly proposed Onsager boundary conditions have been used to study a Poisson heat conduction problem using a higher order Hermite discretization; the results have been compared to those presented in \cite{Torrilhon2015}.
\end{abstract}

\section{Stable boundary conditions}
The present section presents the criteria which a set of boundary conditions, for linear symmetric hyperbolic initial boundary value problem(IBVPs), should satisfy in order to be stable; these criteria have also been presented in \cite{Nordstram2016,Friedrich1958,Sarna2017} and they originate from energy estimates. The energy estimates provide us with a upper bound for the solution in some norm. The stability of the boundary conditions is closely connected to the well-posedness of IBVPs, see \cite{Nordstram2016,Friedrich1958,David} for a detailed discussion on well-posedness. 

\subsection{Preliminaries}
A general linear IBVP can be given as 
\begin{subequations}
\begin{align}
\pd_t \boldsymbol{\alpha}(\mb{x},t) + \displaystyle \sum_{i = 1}^d\mb{A}^{(i)}\pd_{x_i} \boldsymbol{\alpha}(\mb{x},t) = &\mb{F}(\mb{x},t),  \quad  \quad \forall \mb{x} \in \Omega	\label{moment system abstract} \\
\boldsymbol{\alpha}(\mb{x},0) =& \mb{f}(\mb{x}) \label{initial condition}\\
\mb{B}\boldsymbol{\alpha}^{(n,t,r)} =& \mb{g}(t),\quad\quad \forall \mb{x} \in \pd \Omega \label{boundary condition}
\end{align}
\end{subequations}
where $\boldsymbol{\alpha}\in \mbb{R}^{m}$ is the solution vector, $\mb{A}^{(i)}\in \mbb{R}^{m\times m}$ is a constant coefficient matrix which is not necessarily symmetric. The boundary conditions are prescribed by $\mb{B}\in \mbb{R}^{p\times m}$, the exact form of which will be discussed in the coming sections, and $d$ represents the total number of spatial dimensions. The vectors $\mb{f}(\mb{x})\in \mbb{R}^m$ and $\mb{g}(t)\in \mbb{R}^{p}$ are the given data of the problem and represent the initial and the boundary conditions respectively; furthermore, $\mb{F}\in \mbb{R}^m$ is an external forcing applied to the system and can be used to drive the system into a particular direction. The vectors $\mb{f}$, $\mb{g}$ and $\mb{F}$ will be considered to be infinitely differentiable i.e. $\mb{f}\in \left[C^{\infty}(\Omega)\right]^m, \mb{F}\in\left[C^{\infty}(\Omega)\right]^m $ and $\mb{g}\in \left[C^{\infty}(\Omega)\right]^p$. 

If we represent with $\mb{n},\mb{t}$ and $\mb{r}$ the unit vectors which span the local coordinate system at a boundary point then $\boldsymbol{\alpha}^{(n,t,r)}$ represents the solution in this local coordinate system. The solution $\boldsymbol{\alpha}$, defined in the global coordinate system is related to $\boldsymbol{\alpha}^{(n,t,r)}$ by the following relation
\begin{align}
\boldsymbol{\alpha}^{(n,t,r)} = \mb{T}\boldsymbol{\alpha}
\end{align}
where $\mb{T}$ is a projector matrix. We will also assume the system in \eqref{moment system abstract} to be symmetric hyperbolic, the assumption of symmetric hyperbolicity is crucial for the application of energy estimates. Then due to the symmetric hyperbolicity of the system there will exist a symmetric positive definite matrix $\mb{S}$ such that it symmetrizes the system, in \eqref{moment system abstract}, from the left. We now have the following definition for stable boundary conditions (see \cite{Nordstram2016,Friedrich1958,Sarna2017}).

\begin{definition}
For an IBVP, a set of boundary conditions is said to be stable if it leads to the following energy estimate
\begin{align}
\|\boldsymbol{\alpha}(.,t)\|^2_{\mb{S}} \leq \lambda(t)\left(\|\mb{f}\|^2 + \int_0^t\left(\|\mb{F}(.,\tau)\|^2+|\mb{g}(\tau)|^2\right)d\tau\right)	\label{bound inhomo}
\end{align}
where $\lambda(t)$ is a function bounded independently of $\mb{f}(\mb{x})$ and $\mb{g}(t)$. The norm $\|\boldsymbol{\alpha}(.,t)\|_{\mb{S}}$ is defined as
\begin{align}
\|\boldsymbol{\alpha}\|_{\mb{S}} = \sqrt{\int_{\Omega}\boldsymbol{\alpha}^T\mb{S}\boldsymbol{\alpha}d\mb{x}}
\end{align}
\end{definition}

Due to the symmetric hyperbolic nature of the equations in \eqref{moment system abstract}, the quantity $\boldsymbol{\alpha}^T\mb{S}\boldsymbol{\alpha}$ represents a convex entropy functional for \eqref{moment system abstract}. Therefore the expression in \eqref{bound inhomo} means that we would like to prescribe the boundary conditions such that the temporal evolution of the $L^2(\Omega)$ norm of the entropy functional remains bounded by the given data of the problem. In all the coming analysis we will ignore the influence from external forcing and thus consider $\mb{F} = 0 $.

\begin{remark}
In the present work we are not concerned with discontinuous solutions therefore, in addition to being infinitely differentiable, we will consider the initial and the boundary conditions in \eqref{moment system abstract} to be compatible.
\end{remark}
\begin{remark}
To obtain a unique solution for our IBVP, it is crucial to prescribe appropriate number of boundary conditions. This translates into an appropriate value for $p$.
\end{remark}
\subsection{Symmetrizing the system of equations}
Let a variable $\mb{v} \in \mbb{R}^m$ be defined as 
\begin{align}
\mb{v} = \mb{S}^{\frac{1}{2}}\boldsymbol{\alpha}	\label{symm variable}
\end{align}
Then inserting the above relation in \eqref{moment system abstract}-\eqref{boundary condition} we obtain the following IBVP for $\mb{v}$
\begin{subequations}
\begin{align}
\pd_t \mb{v}(\mb{x},t) + \displaystyle \sum_{i = 1}^d\mb{S}^{\frac{1}{2}}\mb{A}^{(i)}\mb{S}^{-\frac{1}{2}}\pd_{x_i} \mb{v}(\mb{x},t) = &\mb{S}^{\frac{1}{2}}\mb{F}(\mb{x},t),  \quad  \quad \forall \mb{x} \in \Omega	\label{moment system abstract v} \\
\mb{v}(\mb{x},0) =& \mb{S}^{\frac{1}{2}}\mb{f}(\mb{x}) \label{initial condition v}\\
\mb{B}\mb{T}\mb{S}^{-\frac{1}{2}}\mb{v} =& \mb{g}(t),\quad\quad \forall \mb{x} \in \pd \Omega \label{boundary condition v}
\end{align}
\end{subequations}
Since $\mb{S}$ is a symmetrizer for our system in \eqref{moment system abstract} so all the matrices $\mb{S}^{\frac{1}{2}}\mb{A}^{(i)}\mb{S}^{-\frac{1}{2}}$ will be symmetric. 

\subsection{Rotational Invariance}
In addition to assuming symmetric hyperbolicity for \eqref{moment system abstract}, we will also assume the system to be rotationally invariant. Let $\mb{n}$ represent a unit vector then we can define $\mb{A}^{(n)}$ as 
\begin{align}
\mb{A}^{(n)} = \displaystyle \sum_{i = 1}^d\mb{A}^{(i)}n_i
\end{align}
Due to the assumption of rotational invariance, we have
\begin{align}
\mb{A}^{(n)} = \mb{T}^{-1} \mb{A}^{(1)} \mb{T},\quad \mb{T}^{-T}\mb{S}\mb{T}^{-1} = \mb{S} \label{rotatinal invariance}
\end{align}
Since $\mb{S}^{\frac{1}{2}}\mb{A}^{(n)}\mb{S}^{-\frac{1}{2}}$ is a symmetric matrix, so it's eigenvalue decomposition can be given as
\begin{gather}
\mb{S}^{\frac{1}{2}}\mb{A}^{(n)}\mb{S}^{-\frac{1}{2}} =  \mb{X}\mb{\Lambda}\mb{X}^{T}	\label{EV decomp}
\end{gather}
The similarity of $\mb{A}^n$ with respect to $\mb{A}^{(1)}$ shows us that the characteristic velocities of our system in \eqref{moment system abstract} and \eqref{moment system abstract v} are independent of the unit vector $\mb{n}$ and thus independent of the direction.

\subsection{Energy estimate}
To obtain an energy estimate for the symmetrized system in \eqref{moment system abstract v}, we multiply it from the left by $\mb{v}^T$, integrate over $\Omega$ and use the Gauss theorem to obtain 
\begin{align}
\pd_t \|\mb{v}\|^2 + \oint_{\pd \Omega}\mb{v}^T \mb{S}^{\frac{1}{2}}\mb{A}^n \mb{S}^{-\frac{1}{2}} \mb{v}d\mb{x} = 0.	\label{evolution norm}
\end{align}
We note that $\|\mb{v}\| = \|\boldsymbol{\alpha}\|_{\mb{S}}$ therefore the above relation governs the evolution of the entropy of our system. To obtain a bound of the form , we need to study the structure of the following quadratic form $\mcal{H}$
\begin{subequations}
\begin{align}
\mcal{H}=&\mb{v}^T \mb{S}^{\frac{1}{2}}\mb{A}^n \mb{S}^{-\frac{1}{2}} \mb{v}	\label{restriction boundary} \\
			= &\boldsymbol{\alpha}^T\mb{S}\mb{A}^n\boldsymbol{\alpha}	. \label{restriction boundary alpha}
\end{align}
\end{subequations}
We can now use the rotational invariance of our system to simplify the above expression for $\mcal{H}$ in the following way
\begin{subequations}
\begin{align}
\mcal{H} = &\boldsymbol{\alpha}^T\mb{S}\mb{A}^n\boldsymbol{\alpha} \\
			 = &\left(\boldsymbol{\alpha}^{(n,t,r)}\right)^T\mb{T}^{-T}\mb{S}\mb{T}^{-1}\mb{A}^{(1)}\boldsymbol{\alpha}^{(n,t,r)} \\
			 = &\left(\boldsymbol{\alpha}^{(n,t,r)}\right)^T\mb{S}\mb{A}^{(1)}\boldsymbol{\alpha}^{(n,t,r)}. \label{def H local coords}
\end{align}
\end{subequations}
We can now define characteristic variable, $\mb{W}$, for our system in \eqref{moment system abstract v} as
\begin{align}
\mb{W} = \mb{X}^T\mb{v}	\label{def W}
\end{align}
Using the above relation, we can transform \eqref{restriction boundary} to 
\begin{align}
\mcal{H} = \mb{W}^T\mb{\Lambda}\mb{W} = \mb{W}_{-}^T\mb{\Lambda}_{-}\mb{W}_- + \mb{W}_{+}^T\mb{\Lambda}_{+}\mb{W}_+	\label{def H}
\end{align}
where $\mb{W}_{-/+}$ are the characteristic variables which move with negative and positive characteristic speeds respectively. Additionally, $\boldsymbol{\Lambda}_{-/+}$ are diagonal matrices which collect negative and positive characteristic velocities on the diagonal. 

%

\subsection{Stability criteria}
 In order to prescribe values to only the characteristic variables which come into the domain, we would like to have the following relation for $\mb{W}_-$ at the boundary
\begin{align}
\mb{W}_- = \mb{R}_+ \mb{W}_+ + \mb{R}_0 \mb{W}_0 + \left(\mb{B}\mb{X}_-\right)^{-1}\mb{g}.	\label{W at boundary inhomo}
\end{align}
The matrices $\mb{R}_+$ and $\mb{R}_0$ appearing in the above expression are given as 
\begin{align}
\mb{R}_0 =- \left(\mb{B}\mb{X}_-\right)^{-1}\mb{B}\mb{X}_0 ,\quad \mb{R}_+ =  -\left(\mb{B}\mb{X}_-\right)^{-1}\mb{B}\mb{X}_+. \label{def R0}
\end{align}
See \cite{Torrilhon2017,Sarna2017} for more details. Substituting the above relation into \eqref{def H}, we obtain the following two conditions for a stable set of inhomogeneous boundary conditions(see \cite{Sarna2017,Torrilhon2017}) 
\begin{align}
ker\{\mb{A}^{(1)}\} \subseteq ker\{\mb{B}\},\quad \quad \mb{R}_+^T\mb{\Lambda}_-\mb{R}_++\mb{\Lambda}_+ > 0	\label{stability condition inhomo}
\end{align}
Since homogeneous boundary conditions are a special case of the inhomogeneous boundary conditions so we will not be discussing them in detail; see \cite{Nordstram2016,Friedrich1958,Torrilhon2017,Sarna2017} for a study of homogeneous boundary conditions.

\subsection{Onsager boundary conditions} \label{onsager boundary conditions}
Let us assume that the matrix $\mb{S}\mb{A}^{(1)}$ has the following structure 
\begin{align}
\mb{S}\mb{A}^{(1)} =  \left( \begin{matrix}\mb{0} &  \mb{A}^*\\
				\left(\mb{A}^*\right)^T&  0 	\end{matrix} \right)	\label{structure on An}
\end{align}
where $\mb{A}^*\in\mbb{R}^{p\times q}$ and $p + q  = m$. Additionally we will assume that $\mb{S}\mb{A}^{(1)}$ has the following properties
\begin{itemize}
\item The number of negative eigenvalues of $\mb{A}^{(1)}$ are equal to $p$.
\item The rows of $\mb{A}^*$ are linearly independent which leads to the following structure for $ker\{\mb{S}\mb{A}^{(1)}\}$
\begin{align}
ker\{\mb{S}\mb{A}^{(1)}\}=ker\{\mb{A}^{(1)}\} = 	\left( \begin{matrix} \mb{0}\\
									ker\{\mb{A}^*\} \end{matrix}
									 \right)	\label{structure kernel An}
\end{align}
\end{itemize}
In writing the first equality, we have used the fact that $\mb{S}$ is a symmetric positive definite matrix. Due to our assumption on the structure of $\mb{S}\mb{A}^{(1)}$ described in \eqref{structure on An}, we will consider $\boldsymbol{\alpha}$ and $\boldsymbol{\alpha}^{(n,t,r)} $ to be structured as 
\begin{align}
\boldsymbol{\alpha} = 	\left( \begin{matrix} \boldsymbol{\alpha}^p\\
									\boldsymbol{\alpha}^q \end{matrix}
									 \right),\quad 
									 \boldsymbol{\alpha}^{(n,t,r)} = 	\left( \begin{matrix} \boldsymbol{\alpha}_p^{(n,t,r)}\\
									\boldsymbol{\alpha}^{(n,t,r)}_q \end{matrix}
									 \right) \label{structure alpha}
\end{align}
where $\boldsymbol{\alpha}_p \in \mbb{R}^p$ and $\boldsymbol{\alpha}_q \in \mbb{R}^q$. Using \eqref{structure on An} and \eqref{structure alpha}, our quadratic form $\mcal{H}$ appearing in \eqref{def H local coords} can be simplified to 
\begin{align}
\mcal{H}  = 2\left(\boldsymbol{\alpha}^{(n,t,r)}_p\right)^T\mb{A}^*\boldsymbol{\alpha}^{(n,t,r)}_q	\label{def H Onsager}
\end{align}
Let us now relate $\boldsymbol{\alpha}^{(n,t,r)}_p$ to $\boldsymbol{\alpha}^{(n,t,r)}_q$, at the boundary, through the following relation
\begin{align}
\boldsymbol{\alpha}^{(n,t,r)}_p = \mb{L}\mb{A}^*\boldsymbol{\alpha}^{(n,t,r)}_q + \mb{g}	\label{Onsager boundary conditiones inhomo}
\end{align}
where $\mb{L}\in\mbb{R}^{p\times p}$ is a symmetric positive semi-definite Onsager matrix and $\mb{g}$ is the inhomogeneity arising from the wall. Due to our assumption on the number of negative eigenvalues of $\mb{A}^{(1)}$, the relation given in \eqref{Onsager boundary conditiones inhomo} prescribes the appropriate number of boundary conditions. Substituting the above boundary conditions into our quadratic form in \eqref{def H Onsager}, we obtain 
\begin{align}
\mcal{H} = 2\left(\boldsymbol{\alpha}^{(n,t,r)}_q\right)^T\left(\mb{A}^*\right)^T\mb{L} \mb{A}^*\boldsymbol{\alpha}^{(n,t,r)}_q  + 2\mb{g}^T \mb{A}^*\boldsymbol{\alpha}^{(n,t,r)}_q	\label{def H OBC inhomo}
\end{align}
In \cite{Sarna2017} it was observed that under certain assumptions on the inhomogeneity arising from the wall, $\mb{g}$ can be decomposed as 
\begin{align}
\mb{g} = \left( \begin{matrix} \mb{0}\\
					\hat{\mb{g}} \end{matrix} \right)	\label{structure g}
\end{align}
In the case of moment equations, to be discussed in the coming sections, $\mb{L}$ and $\mb{A}^*$ exhibit the following structure 
\begin{align}
\mb{A}^* =	 \left( \begin{matrix} 1 & \dots\\
						\mb{0} & \mb{A}^{\dagger} \end{matrix} \right)	\quad \mb{L} = \left( \begin{matrix} \mb{0} & \mb{0} \\
					\mb{0} & \hat{\mb{L}} \end{matrix} \right)\quad \boldsymbol{\alpha}^{(n,t,r)}_q = \left( \begin{matrix} \boldsymbol{\alpha}^{(n,t,r)}_q\\
					\hat{\boldsymbol{\alpha}}^{(n,t,r)}_q \end{matrix} \right)		\label{structure R}
\end{align}
where $\hat{\mb{L}}$ is a symmetric positive definite matrix. 
The above structure for $\mb{L}$ and $\mb{A}^*$ was identified in \cite{Sarna2017}. Using \eqref{structure g}  and \eqref{structure R}, $\mcal{H}$ appearing in \eqref{def H OBC inhomo} can be simplified to 
\begin{align}
\mcal{H} = 2\left(\hat{\boldsymbol{\alpha}}^{(n,t,r)}_q\right)^T\left(\mb{A}^{\dagger}\right)^T\hat{\mb{L}} \mb{A}^{\dagger}\hat{\boldsymbol{\alpha}}^{(n,t,r)}_q  + 2\hat{\mb{g}}^T \mb{A}^{\dagger}\hat{\boldsymbol{\alpha}}^{(n,t,r)}_q
\end{align}
The spd nature of $\hat{\mb{L}}$ provides us with the following statement
\begin{align}
\left(\hat{\boldsymbol{\alpha}}^{(n,t,r)}_q\right)^T\left(\mb{A}^{\dagger}\right)^T\hat{\mb{L}} \mb{A}^{\dagger}\hat{\boldsymbol{\alpha}}^{(n,t,r)}_q \geq 0 
\end{align}
 which immediately provides us with a bound of the form \eqref{bound inhomo}.

\section{The Boltzmann Equation}

Considering the physical and the velocity space to be multi-dimensional, the Boltzmann equation is given as 
\begin{align}
\partial_t f + \xi_i \partial_{x_i} f = Q(f) \label{BE}
\end{align}
where $\left(t,\mb{x},\boldsymbol{\xi}\right)\in \mbb{R}^+\times \mbb{R}^d\times \mbb{R}^d$, $Q(f)$ is the Boltzmann collision operator \cite{Carlos} and $f = f(\mb{x},\boldsymbol{\xi},t)$ defines the phase density function. In the above equation, a sum over $i$ from $1$ to $d$ has been implicitly assumed (summation convention). The collision operator, $Q(f)$ is such that 
\begin{align}
Q(f_{\mcal{M}}) = 0	\label{equilibrium condition}
\end{align}
where $f_{\mcal{M}}$ is the Maxwell-Boltzmann distribution function given as 
\begin{equation}
 f_{\mcal{M}}(\boldsymbol{\xi};\rho,\mb{v},\theta) = \frac{\rho(t,x)}{\left(2\pi \theta(t,x)\right)^{d/2}}\exp \left(-\frac{\left(\xi_i-v_i(t,x)\right)^2}{2\theta(t,x)}\right)	\label{MB Distribution}
\end{equation}
In the above equation, $\rho$, $\mb{v}$ and $\theta$ represent the density, velocity and temperature (in energy units) respectively of the flow and are defined with respect to $f$ as 
\begin{gather}
\rho = m \int_{\mbb{R}^d} f d\boldsymbol{\xi},\quad
\rho v_i = m \int_{\mbb{R}^d} \xi_i f d\boldsymbol{\xi},\quad
\rho v^2 + d\rho \theta = m \int_{\mbb{R}^d}\xi^2fd\boldsymbol{\xi}
\end{gather}

In the present work, we are only interested in flow states which lie close to the global equilibrium $f_0 = f_{\mcal{M}}(\boldsymbol\xi;\rho_0,0,\theta_0)$. Clearly , $\pd_t f_0 = \pd_{x_{i}}f_0 = 0$. The quantities $\rho_0$ and $\theta_0$ represent the corresponding ground states for density and temperature respectively. Let $\epsilon$ represent some smallness parameter then we can linearise $f$ about $f_0$ as 
\begin{align}
f = f_0 + \epsilon \td{f}.
\end{align}
Substituting the above linearisation into \eqref{BE} and using $\pd_t f_0 = \pd_{x_i}f_0 = 0$, we obtain the following equation upto $\mcal{O}(\epsilon)$
\begin{align}
\partial_t \td{f} + \xi_i \partial_{x_i} \td{f} = \td{Q}(\td{f}) \label{BE linear}
\end{align}
where $\td{Q}(\td{f})$ is some linearisation of $Q(f)$ about $f_0$. In writing the above equation we have used the fact that $Q(f_0) = 0$ which trivially follows from \eqref{equilibrium condition}. 
\subsection{Hermite Discretization}
Similar to \cite{Grad1949}, we will discretize $\td{f}$ as 
\begin{equation}
\td{f} \approx \td{f}_{h}\left(  \mathbf{x},\boldsymbol{\xi},t\right)  =\sum_{n=0}^{N_{d}}\sum
_{s=0}^{M_{n}}\alpha_{i_{1}i_{2}\cdots i_{n}}^{(s)}(\mathbf{x},t)\psi_{i_{1}%
i_{2}\cdots i_{n}}^{(s)}\left(\frac{\boldsymbol{\xi}}{\theta_{0}^{1/2}}\right)~f_0  \label{Ansatz}%
\end{equation}
where $(n,s)\in \mbb{N}\times \mbb{N}$. The values of $N_d$ and $M_n$ in the above expression represent our resolution in the velocity space. The values selected for $N_d$ and $M_n$ determine the moment theory being considered, see \cite{Torrilhon2015} for more details. The basis functions $\psi_{i_{1}i_{2}\cdots i_{n}}^{(s)}$ appearing in the above expression are given as 
\begin{align}
\psi^{(s)}_{i_1\dots i_n}(\boldsymbol\xi)  = L_s^{(n)}\left(\frac{\xi_i\xi_i}{2\theta_0}\right)\nu_{i_1\dots i_n} \label{def psi}
\end{align}
with $L^{(n)}_s$ and $\nu_{i_1\dots i_n}$ defined as 
\begin{subequations}
\begin{gather}
L_s^{(n)}(x) = 2^{n/2} x^{n/2} \sqrt{\frac{\Gamma \left(n+\frac{3}{2}\right)}{n! s! \Gamma \left(n+s+\frac{3}{2}\right)}} \sum_{p=0}^s (-1)^p\frac{\Gamma(n+s+3/2)}{\Gamma(n+p+3/2)}{{s}\choose{p}}x^p\label{laguerre1} \\
\nu_{i_1\dots i_n} = \nu_{\langle i_{1}}\nu_{i_{2}}\cdots\nu_{i_{n}\rangle}=\frac{(-1)^{n}%
}{(2n-1)!!}\left\Vert
{\boldsymbol x }%
\right\Vert ^{n+1}\frac{\partial^{n}}{\partial x_{i_{1}}\partial x_{i_{2}%
}\cdots\partial x_{i_{n}}}\left(  \frac{1}{\left\Vert
{\boldsymbol x}%
\right\Vert }\right)  \label{nuFormel}%
\end{gather}
\end{subequations}
where $\mb{x}$ is some position vector. As is clear from the above formulae, $L^{(n)}_s$ models the radial dependence of the distribution function and the trace free tensor $\nu_{i_1\dots i_n}$ models the anisotropy of the distribution function; see \cite{ET,Torrilhon2015,Torrilhon2017} for more details. The basis functions, $\psi^{(s)}_{i_1\dots i_n}(\boldsymbol\xi)$ , enjoy the following orthogonality property
\begin{align}
A_{i_1\dots i_n} \lan\psi^{(s)}_{i_1\dots i_n},\psi^{(r)}_{j_1\dots j_m} \ran_{\mbb{R}^{d},f_0}
= 
 \begin{cases} 
      0, & (n,s)\neq (m,r) \\
	 A_{\lan j_1\dots j_m \ran }, & \text{else}
   \end{cases}		\label{orthogonality psi}
\end{align}
Testing our discretization in \eqref{Ansatz} with $\psi^{(s)}_{i_1\dots i_n}$ and using the orthogonality of the basis functions given in \eqref{orthogonality psi}, we obtain the following relation for $\alpha_{i_1\dots i_n}^{(s)}$
\begin{align}
\alpha_{\lan i_1\dots i_n\ran}^{(s)} = \lan \td{f}_h,\psi^{(s)}_{i_1\dots i_n} \ran_{\mbb{R}^d,f_0} \label{def alpha}
\end{align}
Due to the above relation, we will consider all the $\alpha_{i_1\dots i_n}^{(s)}$ to be trace-free. The first few $\alpha_{i_1\dots i_n}^{(s)}$ appearing in \eqref{Ansatz} are related to the macroscopic quantities through the following relations 
\begin{subequations}
\begin{gather}
\frac{\tilde{\rho}}{\rho_0} = \alpha^{(0)},\quad \frac{\tilde{v}_i}{\sqrt{\theta_0}} = \alpha_i^{(0)},\quad \frac{\tilde{\theta}}{\theta_0}=-\sqrt{\frac{2}{3}}\alpha^{(1)} \label{relation alpha prim1}\\
\quad \frac{\tilde{\sigma}_{ij}}{\rho_0\theta_0} = \sqrt{2}\alpha_{ij}^{(0)},\quad \frac{\tilde{q}_i}{\rho_0\theta_0^{\frac{3}{2}}}=-\sqrt{\frac{5}{2}}\alpha_i^{(1)}	\label{relation alpha prim2}
\end{gather}
\end{subequations}
where $\td{\sigma}_{ij}$(stress tensor) and $\td{q}_i$(heat flux) represent the deviation of $\sigma_{ij}$ and $q_i$ from their respective ground states.
In \cite{Torrilhon2015} it was discussed that by appropriately choosing $N_d$ and $M_n$, we can ensure the rotational invariance of our moment system; therefore in the present work we will only be considering those moment systems which are rotationally invariant. Due to rotational invariance, the quadratic form $\mcal{H}$ appearing in the energy estimate will only have a contribution from $\mb{A}^{(1)}$ (see \eqref{def H local coords}).
In \autoref{onsager boundary conditions}, the formulation of OBCs for a general system relied upon certain crucial properties of $\mb{A}^{(1)}$. To identify these properties for our moment system, 
 it would be sufficient to divide the basis functions in \eqref{def psi} depending upon their even and odd property with respect to $\xi_1$.
 In addition to $\psi_{i_1\dots i_n}^{(s)}$, with $\psi_{i_1\dots i_n}^{(s,o)}$ and $\psi_{i_1\dots i_n}^{(s,e)}$ we will represent those basis functions which are odd and even in $\xi_1$ respectively. Due to the orthogonality of the basis functions \eqref{orthogonality psi}, we have the following relation for $\psi_{i_1\dots i_n}^{(s,o)}$ and $\psi_{i_1\dots i_n}^{(s,e)}$
\begin{align}
\lan\psi^{(s,o)}_{i_1\dots i_n},\psi^{(r,e)}_{j_1\dots j_m}\ran_{\mbb{R}^d,f_0} = 0\quad \quad \forall (n,m,s,r) \label{orthogonality even odd}
\end{align}
Using $\psi^{(s,o)}_{i_1\dots i_n}$  and $\psi^{(s,e)}_{i_1\dots i_n}$, we can now define the moments $\alpha^{(s,o)}_{i_1\dots i_n}$  and $\alpha^{(s,e)}_{i_1\dots i_n}$ as
\begin{align}
\alpha_{\lan i_1\dots i_n\ran}^{(s,e)} = \lan \td{f}_h,\psi^{(s,e)}_{i_1\dots i_n} \ran_{\mbb{R}^d,f_0}, \quad \alpha_{\lan i_1\dots i_n\ran}^{(s,o)} = \lan \td{f}_h,\psi^{(s,o)}_{i_1\dots i_n} \ran_{\mbb{R}^d,f_0} 	\label{def even odd moments}
\end{align}
With $n_o$ and $n_e$ we will represent the total number of odd and even moments respectively. We will now split $\td{f}_h$ into $\td{f}_h^o$ and $\td{f}_h^e$ in the following way
\begin{align}
\td{f}_h = \td{f}_h^o + \td{f}_h^e	\label{even odd split f}
\end{align}
where $\td{f}_h^o$ and $\td{f}_h^e$ are odd and even functions of $\xi_x$ respectively.
We note that $\td{f}_h^o\in span\{\psi^{(s,o)}_{i_1\dots i_n}f_0\}$ and $\td{f}_h^e\in span\{\psi^{(s,e)}_{i_1\dots i_n}f_0\}$.
\subsection{The Moment system}
Inserting our discretization in \eqref{Ansatz} into our linearised Boltzmann equation \eqref{BE linear} and integrating with respect to $\boldsymbol{\xi}$ after multiplication with $\psi^{(s)}_{i_1\dots i_n}$, we obtain the following expression 
\begin{align}
\pd_t \left(\lan \psi^{(s)}_{i_1\dots i_n} ,\td{f}^o_h  \ran_{\mbb{R}^d}\right. + &\left.\lan \psi^{(s)}_{i_1\dots i_n} ,\td{f}^e_h  \ran_{\mbb{R}^d}\right) + \nonumber\\
&\pd_{x_k} \left(\lan\xi_k \psi^{(s)}_{i_1\dots i_n} ,\td{f}^o_h  \ran_{\mbb{R}^d} + \lan \xi_k\psi^{(s)}_{i_1\dots i_n} ,\td{f}^e_h  \ran_{\mbb{R}^d}\right)
  = \lan\psi^{(s)}_{i_1\dots i_n},\td{Q}(\td{f})\ran_{\mbb{R}^d}.	\label{weak BE}
\end{align}

Similar to \cite{Sarna2017}, we will be ignoring the contribution from $\lan\psi^{(s)}_{i_1\dots i_n},\td{Q}(\td{f})\ran_{\mbb{R}^d}$ since it does not leads to any growth in $\|\boldsymbol{\alpha}\|$; see \cite{ET,Struchtrupbook,Carlos} for more details.
Due to the orthogonality of the even and odd basis functions given in \eqref{orthogonality even odd} and the recursion relations for the Laguerre polynomials (see \cite{Torrilhon2015}), we have the following relations
\begin{subequations}
\begin{gather}
\lan \psi^{(s,o)}_{i_1\dots i_n} ,\td{f}^e_h  \ran_{\mbb{R}^d} = 0 ,\quad \lan \xi_1\psi^{(s,o)}_{i_1\dots i_n} ,\td{f}^o_h  \ran_{\mbb{R}^d} = 0 \label{OrthoConv1}\\
\lan \psi^{(s,e)}_{i_1\dots i_n} ,\td{f}^o_h  \ran_{\mbb{R}^d} = 0 ,\quad \lan \xi_1\psi^{(s,e)}_{i_1\dots i_n} ,\td{f}^e_h  \ran_{\mbb{R}^d} = 0. \label{OrthoConv2}
\end{gather}
\end{subequations}
Choosing $\psi^{(s)}_{i_1\dots i_n}$ to be $\psi^{(s,o)}_{i_1\dots i_n}$ and $\psi^{(s,e)}_{i_1\dots i_n}$ consecutively in \eqref{weak BE}, we obtain the following equations for the set of even, $\boldsymbol\alpha_e\in\mbb{R}^{n_e}$, and odd, $\boldsymbol{\alpha}_o\in\mbb{R}^{n_o}$, moments
\begin{align}
\pd_t \boldsymbol\alpha_o + \displaystyle\sum_{i = 1}^d\bar{\mb{A}}_{oe}^{(i)}\pd_{x_i}\boldsymbol{\alpha} = 0 ,\quad 
\pd_t \boldsymbol\alpha_e + \displaystyle\sum_{i = 1}^d\bar{\mb{A}}_{eo}^{(i)}\pd_{x_i}\boldsymbol{\alpha} = 0\label{moment system}
\end{align}
where $\bar{\mb{A}}_{oe}^{(i)}\in\mbb{R}^{n_o\times n_e} $ and $\bar{\mb{A}}_{eo}^{(i)}\in\mbb{R}^{n_e\times n_o}$. In the above relation, we have assumed $\boldsymbol{\alpha}$ to be ordered as $\boldsymbol{\alpha} = \left(\boldsymbol{\alpha}_o,\boldsymbol{\alpha}_e\right)^T$. Using the orthogonality relations from \eqref{OrthoConv1} and \eqref{OrthoConv2}, we can identify the following structure for $\mb{A}_{oe}^{(1)}$ and $\mb{A}_{eo}^{(1)}$
\begin{align}
\bar{\mb{A}}_{oe}^{(1)} = \left(\mb{0},\mb{A}_{oe}^{(1)}\right) \quad \bar{\mb{A}}_{eo}^{(1)} = \left(\mb{A}_{eo}^{(1)},\mb{0}\right).	\label{structure Aoe}
\end{align}
It is crucial to note that all the other matrices appearing in our moment system, apart from $\bar{\mb{A}}_{oe}^{(1)}$ and $\bar{\mb{A}}_{oe}^{(1)}$, will not have the same structure as given in \eqref{structure Aoe}. This is due to the fact that the basis functions $\psi_{i_1\dots i_n}^{(s,e)}$ and $\psi_{i_1\dots i_n}^{(s,o)}$ are only even and odd with respect to $\xi_1$;  as a result of which the terms $\lan \xi_{k}\psi^{(s,o)}_{i_1\dots i_n} ,\td{f}^o_h  \ran_{\mbb{R}^d}$ and $\lan \xi_k\psi^{(s,o)}_{i_1\dots i_n} ,\td{f}^o_h  \ran_{\mbb{R}^d}$ for all $k\in\{2,3\}$ do not necessarily vanish. Using the structure of $\bar{\mb{A}}_{oe}^{(1)}$ and $\bar{\mb{A}}_{eo}^{(1)}$, the matrix $\mb{A}^{(1)}$ appearing in our general setting \eqref{moment system abstract} which corresponds to our moment system will have the following structure
\begin{align}
\mb{A}^{(1)} =  \left( \begin{matrix}\mb{0} &  \mb{A}_{oe}^{(1)}\\
				\mb{A}_{eo}^{(1)}&  0 	\end{matrix} \right).	\label{structure on A1}
\end{align}
To formulate stable boundary conditions for our system in \eqref{moment system} , using energy estimates, we will now show that our system in \eqref{moment system} is symmetric hyperbolic and we will also discuss a methodology to construct a symmetrising matrix, $\mb{S}$, for a general moment system.
\subsubsection{Symmetric Hyperbolicity}
In our discretization \eqref{Ansatz}, the moments $\alpha_{i_1\dots i_n}^{(s)}$ have been considered to be trace free due to the orthogonality property of the basis functions \eqref{orthogonality psi}. The trace-free nature of the moments being considered reduces the size of our solution vector $\boldsymbol{\alpha}$. For e.g. if a second order tensor is considered to be trace-free then the total number of unknowns are reduced from nine to five. But let us consider a situation where we consider all the components of every tensor; so an $n$-th order tensor will have in total $3^n$ components. In such a case, our moment system can be generically represented as 
\begin{align}
\pd_t \bar{\boldsymbol{\alpha}}(\mb{x},t) + \displaystyle \sum_{i = 1}^d\bar{\mb{A}}^{(i)}\pd_{x_i} \bar{\boldsymbol{\alpha}}(\mb{x},t) = \mb{0},  \quad  \quad \forall \mb{x} \in \Omega
\end{align} 
where $\bar{\boldsymbol{\alpha}}$ is a solution vector which contains all the components of all the moments begin considered. Obviously, $\bar{\boldsymbol{\alpha}}$ and $\bar{\mb{A}}^{(i)}$ will be bigger in dimension than $\boldsymbol{\alpha}$ and $\mb{A}^{(i)}$. We claim that $\bar{\mb{A}}^{(i)}$ will be symmetric matrices. To show this, we integrate the following identity
\begin{subequations}
\begin{align}
\partial_{\xi_k}\left(\tnsr{\psi}{m}^{(r)} f_0 \tnsr{\psi}{n}^{(s)} \right) = &\left(\partial_{\xi_k}\tnsr{\psi}{m}^{(r)}\right) f_0 \tnsr{\psi}{n}^{(s)} + \tnsr{\psi}{m}^{(r)} \left(\partial_{\xi_k}f_0\right) \tnsr{\psi}{n}^{(s)}\nonumber\\ 
&+ \tnsr{\psi}{m}^{(r)} f_0 (\partial_{\xi_k}\tnsr{\psi}{n}^{(s)}) \\ 
= & \frac{1}{\theta_0}\tnsr{\psi}{m}^{(r)} \xi_k f_0 \tnsr{\psi}{n}^{(s)} + \tnsr{\psi}{m}^{(r)} \partial_{c_x}\left(f_0 \tnsr{\psi}{n}^{(s)}\right)\nonumber\\ + &\tnsr{\psi}{n}^{(s)} \partial_{\xi_k}\left(f_0 \tnsr{\psi}{m}^{(r)}\right)
\end{align}
\end{subequations}
Now using $\partial_{\xi_k}\left\langle \tnsr{\psi}{m}^{(r)}, f_0 \tnsr{\psi}{n}^{(s)} \right \rangle = 0$, we obtain
\begin{align}
\left\langle \tnsr{\psi}{m}^{(r)}, \xi_k f_0 \tnsr{\psi}{n}^{(s)}\right\rangle =- \theta_0\left\langle\tnsr{\psi}{m}^{(r)}, \partial_{\xi_k}\left(f_0 \tnsr{\psi}{n}^{(s)}\right)\right\rangle - \theta_0\left\langle\tnsr{\psi}{n}^{(s)}, \partial_{\xi_k}\left(f_0 \tnsr{\psi}{m}^{(r)}\right)\right\rangle \label{elongated moment system}
\end{align}

The above identity implies that $\left\langle \tnsr{\psi}{m}^{(r)}, \xi_k f_0 \tnsr{\psi}{n}^{(s)}\right\rangle$ is symmetric with respect to the pairs $(m,r)$ and $(n,s)$ for all values of $k$. Considering the derivation of the moment system presented in \eqref{weak BE}, we see that the matrices $\bar{\mb{A}}^{(k)}$ are nothing but $\left\langle \tnsr{\psi}{m}^{(r)}, \xi_k f_0 \tnsr{\psi}{n}^{(s)}\right\rangle$ placed at appropriate locations with some ordering for the tuples $\{i_1\dots i_n\}$ and $\{i_1\dots i_n\}$. Therefore $\bar{\mb{A}}^{(i)}$ will be symmetric. But since the moments being considered are tracefree, a set of equations appearing in \eqref{elongated moment system} will be identical and so the system in \eqref{elongated moment system} can be reduced by removing these equations. This reduction of our system in \eqref{elongated moment system} will lead to our original system given in \eqref{moment system} but will rob $\bar{\mb{A}^{(i)}}$ of it's symmetricity. We note that the system of equations in \eqref{elongated moment system} and \eqref{moment system} is the same, with \eqref{moment system} being just a reduction of \eqref{elongated moment system} obtained by removing identical equations, therefore the hyperbolic nature of our equations will not be lost. Since our system in \eqref{elongated moment system} is symmetric thus it's convex entropy functional $\eta(\bar{\boldsymbol{\alpha}})$ will be given by
\begin{align}
\eta(\bar{\boldsymbol{\alpha}}) = \frac{1}{2}\bar{\boldsymbol{\alpha}}^T\bar{\boldsymbol{\alpha}} 	\label{entropy functional}
\end{align}
Due to the similarity between the systems in \eqref{elongated moment system} and \eqref{moment system}, they should have the same entropy functional. An entropy functional for our system in \eqref{moment system} could be found if we can express $\eta(\bar{\boldsymbol{\alpha}})$ in terms of $\boldsymbol{\alpha}$ with the help of a symmetric matrix $\mb{S}$
\begin{align}
\eta(\boldsymbol{\alpha}) = \boldsymbol{\alpha}^T\mb{S}\boldsymbol{\alpha}
\end{align}
Then $\mb{S}$ will be related to $\eta$ through the following relation
\begin{align}
\mb{S} = \frac{1}{2}\frac{\pd^2\eta}{\pd \boldsymbol{\alpha}^2}	\label{relation S to eta}
\end{align}
Clearly, the assumed convexity of $\eta\left(\boldsymbol{\alpha}\right)$ implies the positive definiteness of $\mb{S}$ due to the above relation. The matrix $\mb{S}$ will then symmetrize our system in \eqref{moment system} from the left, thanks to the following theorem
\begin{theorem}\label{entropy sym system}
If a hyperbolic system is endowed with a convex entropy functional $\eta(\boldsymbol{\alpha})$, then the following variable transformation symmetrizes the system in the Friedrich's sense 
\begin{gather}
\mb{v} = \mb{S}^{\frac{1}{2}}\boldsymbol{\alpha}	\label{sym transformation}
\end{gather}
where $\mb{S} =\frac{1}{2} \frac{\pd^2 \eta(\boldsymbol{\alpha})}{\pd \boldsymbol{\alpha}^2}$
\end{theorem}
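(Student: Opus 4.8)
\emph{Proof plan.} The statement is the Godunov–Mock symmetrization theorem specialized to the (quadratic) entropy occurring here, and I would organize the argument in three movements: (i) convert the \emph{existence} of an entropy--flux pair into the purely algebraic statement that $\mb{S}\mb{A}^{(i)}$ is symmetric for every $i$; (ii) use convexity of $\eta$ to give meaning to $\mb{S}^{1/2}$; (iii) compute the coefficient matrices of the transformed system and read off their symmetry. Throughout I use that in the present setting $\eta(\boldsymbol{\alpha})=\boldsymbol{\alpha}^T\mb{S}\boldsymbol{\alpha}$ is quadratic, so that $\mb{S}=\tfrac12\,\pd^2\eta/\pd\boldsymbol{\alpha}^2$ is a \emph{constant} matrix and $\mb{v}=\mb{S}^{1/2}\boldsymbol{\alpha}$ is a genuine linear change of variables.

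First I would recall what it means for $\eta$ to be a convex entropy functional of $\pd_t\boldsymbol{\alpha}+\sum_{i=1}^d\mb{A}^{(i)}\pd_{x_i}\boldsymbol{\alpha}=\mb{0}$: there exist entropy fluxes $q^{(i)}(\boldsymbol{\alpha})$ such that every smooth solution additionally satisfies the companion law $\pd_t\eta(\boldsymbol{\alpha})+\sum_{i=1}^d\pd_{x_i}q^{(i)}(\boldsymbol{\alpha})=0$. Applying the chain rule and eliminating $\pd_t\boldsymbol{\alpha}$ with the equation, this holds for all smooth fields exactly when the compatibility relations
\begin{align}
\left(\mb{A}^{(i)}\right)^T\nabla_{\boldsymbol{\alpha}}\eta \;=\; \nabla_{\boldsymbol{\alpha}}q^{(i)},\qquad i=1,\dots,d,
\end{align}
hold. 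This is the step I expect to carry the real content. Differentiating once more in $\boldsymbol{\alpha}$: the right-hand side is the Hessian of a scalar and hence symmetric, while (because $\mb{A}^{(i)}$ is constant) the left-hand side differentiates to $\left(\mb{A}^{(i)}\right)^T\nabla^2_{\boldsymbol{\alpha}}\eta$. Equating and transposing gives that $\nabla^2_{\boldsymbol{\alpha}}\eta\,\mb{A}^{(i)}$ is symmetric, i.e. $\mb{S}\mb{A}^{(i)}$ is symmetric for every $i$. (That the hypothesis is genuinely satisfied here is transparent: the elongated form \eqref{elongated moment system} is symmetric, so $\eta(\bar{\boldsymbol{\alpha}})=\tfrac12\bar{\boldsymbol{\alpha}}^T\bar{\boldsymbol{\alpha}}$ admits the explicit fluxes $q^{(i)}=\tfrac12\bar{\boldsymbol{\alpha}}^T\bar{\mb{A}}^{(i)}\bar{\boldsymbol{\alpha}}$, and this pair descends to the reduced system \eqref{moment system}.)

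Next I would invoke convexity: $\eta$ convex makes $\nabla^2_{\boldsymbol{\alpha}}\eta$, hence $\mb{S}$, symmetric positive definite, so the unique symmetric positive-definite square root $\mb{S}^{1/2}$ exists, is invertible, and satisfies $\left(\mb{S}^{1/2}\right)^T=\mb{S}^{1/2}$ and $\left(\mb{S}^{1/2}\right)^{-1}=\mb{S}^{-1/2}$ with $\mb{S}^{-1/2}$ again symmetric. Substituting $\boldsymbol{\alpha}=\mb{S}^{-1/2}\mb{v}$ and left-multiplying by the constant matrix $\mb{S}^{1/2}$ reproduces \eqref{moment system abstract v}, a system whose coefficient matrices are $\mb{S}^{1/2}\mb{A}^{(i)}\mb{S}^{-1/2}$. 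Finally I would write $\mb{S}^{1/2}\mb{A}^{(i)}\mb{S}^{-1/2}=\mb{S}^{-1/2}\left(\mb{S}\mb{A}^{(i)}\right)\mb{S}^{-1/2}$ and take transposes: since $\mb{S}^{-1/2}$ is symmetric and $\mb{S}\mb{A}^{(i)}$ is symmetric by step (i), each $\mb{S}^{1/2}\mb{A}^{(i)}\mb{S}^{-1/2}$ is symmetric, so the transformed system is symmetric hyperbolic — i.e. $\mb{v}=\mb{S}^{1/2}\boldsymbol{\alpha}$ symmetrizes the system in the sense of Friedrichs. The only delicate point is step (i), the Poincar\'e/Schwarz-type passage from the mere existence of an entropy flux to the pointwise symmetry of $\mb{S}\mb{A}^{(i)}$; everything that follows is bookkeeping with symmetric square roots.
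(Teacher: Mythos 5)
Your proposal is correct, but it is worth noting that the paper does not actually prove this statement: its ``proof'' consists of the single line ``See [Tadmor]'', deferring entirely to the literature on entropy symmetrization. What you have written is a complete, self-contained proof of the cited Godunov--Mock result in the special case relevant here (constant coefficient matrices, quadratic entropy). Your key step (i) is sound: the existence of entropy fluxes $q^{(i)}$ gives the compatibility relations $\left(\mb{A}^{(i)}\right)^T\nabla_{\boldsymbol{\alpha}}\eta=\nabla_{\boldsymbol{\alpha}}q^{(i)}$, and differentiating once more (using that $\mb{A}^{(i)}$ is constant) yields $\left(\mb{A}^{(i)}\right)^T\mb{H}=\nabla^2_{\boldsymbol{\alpha}}q^{(i)}$ with $\mb{H}=\nabla^2_{\boldsymbol{\alpha}}\eta$, whence $\mb{H}\mb{A}^{(i)}=\left(\mb{A}^{(i)}\right)^T\mb{H}$ is symmetric; the harmless factor $\tfrac12$ in the paper's definition of $\mb{S}$ does not affect this. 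Steps (ii) and (iii) are then routine manipulations with the symmetric positive definite square root, and the identity $\mb{S}^{\frac12}\mb{A}^{(i)}\mb{S}^{-\frac12}=\mb{S}^{-\frac12}\left(\mb{S}\mb{A}^{(i)}\right)\mb{S}^{-\frac12}$ closes the argument. Your parenthetical remark supplying the explicit fluxes $q^{(i)}=\tfrac12\bar{\boldsymbol{\alpha}}^T\bar{\mb{A}}^{(i)}\bar{\boldsymbol{\alpha}}$ for the elongated system is a useful addition, since the paper is somewhat informal about why the reduced system \eqref{moment system} genuinely inherits a convex entropy pair from \eqref{elongated moment system}; that inheritance is the one point a fully rigorous write-up would still need to spell out, but it is a gap in the paper's exposition rather than in your argument.
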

\begin{proof}
See \cite{Tadmor}
\end{proof}
The above analysis shows us that our system in \eqref{moment system} is symmetric hyperbolic therefore we can use the method of energy estimate to construct stable boundary conditions for the same. 
\subsubsection{Symmetrizing matrix}
Before looking into the construction of the symmetrising matrix, let us consider a simple example. Let's assume we have a moment system which is symmetric and only consists of a second order trace-free tensor $R_{ij}$. For simplicity if we now consider a two-dimensional physical space, then $\bar{\boldsymbol{\alpha}}$ appearing in \eqref{elongated moment system} will have in total five components and will be given as 
\begin{align}
\bar{\boldsymbol{\alpha}} = \{R_{xy},R_{yx},R_{xx},R_{yy},R_{zz}\}
\end{align}
We note that even in the two-dimensional setting, we have considered $R_{zz}$ to be a part of the solution vector since it is related to $R_{xx}$ and $R_{yy}$ due to the trace free nature of $R_{ij}$. For such a system, the entropy functional appearing in \eqref{entropy functional} will be given as 
\begin{align}
\eta = \frac{1}{2}\left(R_{xx}^2 + R_{xy}^2 + R_{yy}^2 + R_{yx}^2 + R_{zz}^2\right)
\end{align}
Since $R_{ij}$ is trace-free, we can reduce $\bar{\boldsymbol{\alpha}}$ to $\boldsymbol{\alpha}$ which can be given as 
 \begin{align}
 \boldsymbol{\alpha} = \{R_{xy},R_{xx},R_{yy}\}
 \end{align}
Using the tracefree nature of $R_{ij}$, we can express $\eta$ as 
\begin{align}
\eta = \frac{1}{2}\left(2 R_{xx}^2 + 2R_{xy}^2 + 2R_{yy}^2 +  2 R_{xx}R_{yy}\right) = \boldsymbol{\alpha}^T \mb{S}\boldsymbol{\alpha} 
\end{align}
If we now use the relation between $\mb{S}$ and $\eta$ given in \eqref{relation S to eta}, then $\mb{S}$ can be identified as
\begin{align}
\mb{S} = \left( \begin{matrix} 1 & 0 & 0 \\ 
										 0 & 1 & \frac{1}{2}  \\
										 0 & \frac{1}{2} & 1   	\end{matrix} \right)
										 \label{S2}
\end{align}
The matrix $\mb{S}$ collects the coefficients which arise in the entropy functional due to the trace-free property of $R_{ij}$ and thus helps us in expressing $\eta$ in terms of our reduced variables $\boldsymbol{\alpha}$. For a general moment system, we can construct the matrix $\mb{S}$ by first constructing the sub-matrices $\mb{S}_n$ which collect the coefficients corresponding to a tensor of degree $n$. The full matrix $\mb{S}$ can then be developed by placing different entries of $\mb{S}_n$ at appropriate locations. 
\begin{exmp}
As an example, let us consider the Grad's-20 (G20) moment system which can be derived by considering the following values for $N_d$ and $M_n$ in our Hermite discretization \eqref{Ansatz}
\begin{align}
N_d = 3,\quad M_0 = 2,\quad M_1 = 2,\quad M_2 = 1,\quad M_3 = 1
\end{align}
see \cite{Torrilhon2015} for more details. If we now restrict ourselves to two and three dimensional physical and velocity space respectively then the vector $\boldsymbol{\alpha}$ is given as 
\begin{align}
\boldsymbol{\alpha} = \{\alpha_x^{(0)},\alpha_{xy}^{(0)},\alpha_x^{(1)},\alpha_{xxx}^{(0)},\alpha_{xyy}^{(0)},\alpha^{(0)},\alpha_y^{(0)},\alpha^{(1)},\alpha^{(0)}_{xx},\alpha_{yy}^{(0)},\alpha_y^{(1)},\alpha_{xxy}^{(0)},\alpha^{(0)}_{yyy}\}
\end{align}
As mentioned above, we will first construct the contributions from the different tensor degrees appearing in our moment set. From the solution vector given in the above expression, we find that the G20 moment system consists of four different tensor degrees i.e. $n = \{0,1,2,3\}$. Therefore we need $S_0$, $\mb{S}_1$ , $\mb{S}_2$ and $\mb{S}_3$ to fully define our symmetrising matrix $\mb{S}$.
The matrix corresponding to the second order tensor, $\mb{S}_2$, has already been given in \eqref{S2}. The expressions for $S_0$, $\mb{S}_1$ and $\mb{S}_3$ are given as 
\begin{align}
S_0 = \frac{1}{2},\quad
 \mb{S}_1 = \left( \begin{matrix} \frac{1}{2} & 0 \\ 
											  0 & \frac{1}{2}  \end{matrix} \right), 
 \quad 
\mb{S}_3 = \left( \begin{matrix} 2 & \frac{3}{2} & 0 & 0 \\ 
																				 \frac{3}{2} & 3 & 0 & 0  \\
																			      0 & 0 & 3 & \frac{3}{2} \\
																			      0 & 0 & \frac{3}{2} & 2  	\end{matrix} \right)
\end{align}
Using $\mb{S}_n$ for $n = \{0,1,2,3\}$, the matrix $\mb{S}$ can be given as 
\begin{align}
\mb{S}_{G20} = \left(
\begin{array}{ccccccccccccc}
 \frac{1}{2} & 0 & 0 & 0 & 0 & 0 & 0 & 0 & 0 & 0 & 0 & 0 & 0 \\
 0 & 1 & 0 & 0 & 0 & 0 & 0 & 0 & 0 & 0 & 0 & 0 & 0 \\
 0 & 0 & \frac{1}{2} & 0 & 0 & 0 & 0 & 0 & 0 & 0 & 0 & 0 & 0 \\
 0 & 0 & 0 & 2 & \frac{3}{2} & 0 & 0 & 0 & 0 & 0 & 0 & 0 & 0 \\
 0 & 0 & 0 & \frac{3}{2} & 3 & 0 & 0 & 0 & 0 & 0 & 0 & 0 & 0 \\
 0 & 0 & 0 & 0 & 0 & \frac{1}{2} & 0 & 0 & 0 & 0 & 0 & 0 & 0 \\
 0 & 0 & 0 & 0 & 0 & 0 & \frac{1}{2} & 0 & 0 & 0 & 0 & 0 & 0 \\
 0 & 0 & 0 & 0 & 0 & 0 & 0 & \frac{1}{2} & 0 & 0 & 0 & 0 & 0 \\
 0 & 0 & 0 & 0 & 0 & 0 & 0 & 0 & 1 & \frac{1}{2} & 0 & 0 & 0 \\
 0 & 0 & 0 & 0 & 0 & 0 & 0 & 0 & \frac{1}{2} & 1 & 0 & 0 & 0 \\
 0 & 0 & 0 & 0 & 0 & 0 & 0 & 0 & 0 & 0 & \frac{1}{2} & 0 & 0 \\
 0 & 0 & 0 & 0 & 0 & 0 & 0 & 0 & 0 & 0 & 0 & 3 & \frac{3}{2} \\
 0 & 0 & 0 & 0 & 0 & 0 & 0 & 0 & 0 & 0 & 0 & \frac{3}{2} & 2 \\
\end{array}
\right)
\end{align}
\end{exmp}

The ordering of moments considered in the present work, is different as compared to that considered in \cite{Torrilhon2015}. In \cite{Torrilhon2015}, the solution vector contains all the components of a particular tensor clubbed together. If one uses such an ordering for the solution vector then the matrix $\mb{S}$ simply consists of various $\mb{S}_n$ placed on the diagonal. On the other hand, in the present work we have considered the odd components and the even components of all the tensors to be clubbed together; such an ordering of the solution vector helps us in formulating OBCs for our moment system in a easier way. Therefore, $\mb{S}$ consists of various entries of $\mb{S}_n$ placed at appropriate locations.

\subsection{Maxwell's Accommodation Model}
Let $\td{f}_{\mcal{M}}$ represent the deviation of $f_{\mcal{M}}$ from $f_0$ upto $\mcal{O}(\epsilon)$. Then using our basis functions defined in \eqref{def psi}, $\td{f}_{\mcal{M}}$ can be expressed as 
\begin{align}
\td{f}_{\mcal{M}}\left(\boldsymbol{\xi};\alpha^{(0)},\alpha_i^{(0)},\alpha^{(1)}\right) = 
&f_0\left(\alpha^{(0)}\psi^{(0)} +  \alpha_i^{(0)} \psi_{i}^{(0)} + \alpha^{(1)}\psi^{(1)}\right) \label{fM}
\end{align}
where the coefficients $\alpha$'s are related to the deviation of $\rho$, $v_i$ and $\theta$ through \eqref{relation alpha prim1}. We will now consider a wall such that the normal pointing from the gas into the wall points in the positive-$x$ direction. Let $\hat{f}$ represent the deviation of the distribution function from $f_0$, at the wall, upto $\mcal{O}(\epsilon)$. Then as per the Maxwell's accommodation model, $\hat{f}$ is given as 
\begin{gather}
\hat{f} =  \begin{cases} 
     \chi f_w + (1-\chi)\td{f}_h(\boldsymbol{\xi}^*)  & \xi_1 \leq 0 \\
	 \td{f}_h(\boldsymbol{\xi}) & \xi_1 > 0 \\
   \end{cases} \label{Maxwell acco}
\end{gather}
where $f_w  = \td{f}_{\mcal{M}}(\boldsymbol{\xi},\alpha_w^{(0)},\alpha^w_i,\alpha_w^{(1)})$ with $\alpha_w^{(0)}$, $\alpha^w_{i}$ and $\alpha_w^{(1)}$ being related to the density, velocity and temperature deviation of the wall respectively. The molecular velocity $\boldsymbol{\xi}^*$ is $\boldsymbol{\xi}$ with the sign for $\xi_1$ reversed, i.e. $\boldsymbol{\xi}^* = (-\xi_1,\xi_2,\xi_3)$.  In a given IBVP, the temperature and the velocity of the wall are given whereas the quantity $\alpha_w^{(0)}$ is computed using mass conservation at the wall which implies that the normal velocity of the gas at the wall should be equal to that of the wall. In the present work, we will assume that the wall has no normal velocity which translates into $\alpha^w_x = 0$. Similar to the odd and even splitting of our distribution function given in \eqref{even odd split f}, we can also split $f_w$ as $f_w = f_w^o + f_w^e$
where 
\begin{align}
f_w^o = f_0\left( \alpha_x^{(0)} \psi_{x}^{(0)}\right),\quad f_w^e = f_0\left(\alpha^{(0)}\psi^{(0)} +  \alpha_y^{(0)} \psi_{y}^{(0)} + \alpha_z^{(0)} \psi_{z}^{(0)} + \alpha^{(1)}\psi^{(1)}\right)
\end{align}
Since we have considered $\alpha^w_x = 0$ thus $f_w^o = 0$. The boundary conditions for our moment system in \eqref{moment system} can now be computed using continuity of fluxes which leads to the following expression after some manipulations; see \cite{Struchtrupbook,Torrilhon2015,Torrilhon2003} for more details 
\begin{align}
\alpha_{i_1\dots i_n}^{(s,o)} = \lan \psi_{i_1\dots i_n}^{(s,o)},\td f_h^o  \ran_{\mbb{R}} = \frac{2\chi}{2-\chi}(\lan \psi_{i_1\dots i_n}^{(s,o)},\td f_h^e  \ran_{\mbb{R}^+} - \lan \psi_{i_1\dots i_n}^{(s,o)},f_w^e  \ran_{\mbb{R}^+}) \label{continuity fluxes}
\end{align}
where $\psi_{i_1\dots i_n}^{(s,o)}$ are the basis functions which are odd with respect to $\xi_1$. Similarly, $\td{f}_h^o$ and $\td{f}_h^e$ represent the odd and even part of the distribution function with respect to $\xi_1$. Due to no-penetration boundary condition at the wall we have 
\begin{align}
\alpha_x^{(0)} = 0.	\label{no penetration}
\end{align}
If we now consider $\psi_{i_1\dots i_n}^{(s,o)}$ to be $\psi_x^{(0)}$ in \eqref{continuity fluxes} and consider the no penetration boundary condition given in the above expression, we obtain the following expression for $\alpha_w^{(0)}$
\begin{align}
\alpha_w^{(0)} = \frac{\left(\lan\psi_x^{(0)},\td{f}_h^e\ran_{\mbb{R}^+}-\alpha_w^{(1)}\lan\psi_x^{(0)},\psi^{(1)}\ran_{(\mbb{R}^+,f_0)}\right )}{\lan\psi_x^{(0)},\psi^{(0)}\ran_{(\mbb{R}^+,f_0)}}
\end{align}
In writing the above expression we have used 
\begin{align}
\lan\psi_x^{(0)},\psi_{y}^{(0)}\ran_{(\mbb{R}^+,f_0)} = \lan\psi_x^{(0)},\psi_{z}^{(0)}\ran_{(\mbb{R}^+,f_0)} = 0.
\end{align}
Substituting the above relation for $\alpha_w^{(0)}$ into \eqref{continuity fluxes} we obtain the following MBCs
\begin{align}
\boldsymbol{\alpha}_o = 2\beta \mb{M}^{(mbc)}\boldsymbol{\alpha}_e + 2\beta\mb{g}	\label{MBC}
\end{align}
where $\beta = \chi/(2-\chi)$, the matrix $\mb{M}^{(mbc)}\in \mbb{R}^{n_o\times n_e}$. The vector $\mb{g}\in \mbb{R}^{n_o}$ is the inhomogeneity arising from the wall. Let $\mb{m}_{\alpha^{(1)}}$, $\mb{m}_{\alpha^{(0)}_y}$ and $\mb{m}_{\alpha^{(0)}_z}$ represent those columns of $\mb{M}^{(mbc)}$ which are multiplied by $\alpha^{(1)}$, $\alpha^{(0)}_y$ and $\alpha^{(0)}_z$ respectively then the vector $\mb{g}$ can be given as 
\begin{align}
\mb{g} = -\left(\alpha_w^{(1)}\mb{m}_{\alpha^{(1)}} + \alpha_y^{w}\mb{m}_{\alpha^{(0)}_y}+ \alpha_z^{w}\mb{m}_{\alpha^{(0)}_z}\right)	\label{def g}
\end{align}
As discussed above, we have assumed that the wall has zero velocity in the normal direction therefore due to the no-penetration boundary condition given in \eqref{no penetration} we find that the first entry of $\mb{g}$ will be zero i.e.
\begin{align}
g_1 = 0.	\label{structure g}
\end{align}
Due to the computation of $\alpha_w^0$ we note that the matrix $\mb{M}^{(mbc)}$ will have the following structure 
\begin{align}
\mb{M}^{(mbc)} = \left( \begin{matrix} \mb{0} & \mb{0} \\
						\mb{0} & \td{\mb{M}}^{(mbc)} \end{matrix} \right).\label{structure MBC}
\end{align}

The above structure of $\mb{M}^{(mbc)}$ shows us that the denisty of the fluid, $\alpha^{(0)}$, does not influence any of the boundary conditions. A similar structure as above for $\mb{M}^{(mbc)}$ was also identified in \cite{Sarna2017} and was helpful in proving the stability of inhomogeneous OBCs. Having formulated the MBCs, we can now study their stability using the conditions given in \eqref{stability condition inhomo}. Using computational analysis we have found that for all the systems, from $G10$ to $G148$, the MBCs are not stable. Since MBCs do not provide us with a stable set of boundary conditions thus we will now look for a set of boundary conditions which are stable.  

\subsection{Onsager Boundary Conditions}
In order to formulate OBCs for our moment system, we need to find similarities between our moment system in \eqref{moment system} and the general formulation developed in \autoref{onsager boundary conditions}. Therefore we will first look into the structure of $\mb{S}\mb{A}^{(1)}$. From the structure of $\mb{A}^{(1)}$ given in \eqref{structure on A1} and the fact that SA is symmetric, we find 
\begin{align}
\mb{S}\mb{A}^{(1)} =  \left( \begin{matrix}\mb{0} &  \mb{A}^{oe}\\
									\left(\mb{A}^{oe}\right)^T&  0 	\end{matrix} \right)	\label{structure on An moment}
\end{align}
Since the general formulation of OBCs relied upon certain assumptions made upon the properties of $\mb{A}^{(1)}$ so we will be assuming the following for the matrix $\mb{S}\mb{A}^{(1)}$ and $\mb{A}^{(1)}$ corresponding to our moment system 
\begin{itemize}
\item the total number of negative eigenvalues of $\mb{A}^{(1)}$ are equal to the total number of odd variables in the system i.e. $n_o$.
\item the $\ker\{\mb{A}^{(1)}\}$ has the following structure 
\begin{align}
ker\{\mb{S}\mb{A}^{(1)}\} = ker\{\mb{A}^{(1)}\}=\left( \begin{matrix} \mb{0}\\
									ker\{\mb{A}^{oe}\} \end{matrix}
									 \right)	\label{structure kernel An}									 
\end{align}
\item splitting the matrix $\mb{A}^{oe}$ as 
\begin{align}
\mb{A}^{oe} =\left(\hat{\mb{A}}^{oe}, \tilde{\mb{A}}^{oe}\right)	\label{split Aoe}
\end{align} 
where $\hat{\mb{A}}^{oe}\in\mbb{R}^{n_o\times n_o}$ and $\td{\mb{A}}^{oe}\in \mbb{R}^{n_o\times (n_e-n_o)}$. We will assume $\hat{\mb{A}}^{oe}$ to be invertible. 
\end{itemize}
The above two assumptions have already been discussed in \autoref{onsager boundary conditions}, the motivation behind the third assumption becomes clear once we consider our model for the Onsager matrix $\mb{L}$. The assumptions described above do not put any restriction upon the applicability of the boundary conditions to be presented because we have found through numerical studies that all the moment systems from $G10$ to $G148$ satisfy the above assumptions. 
Having made the necessary assumptions, we can now compare the solution vector of our moment system (see \eqref{moment system}), $\boldsymbol{\alpha}$, and the matrix $\mb{S}\mb{A}^{(1)}$  with those presented in \autoref{onsager boundary conditions}. This leads to

\begin{align}
\boldsymbol{\alpha}_o = \boldsymbol{\alpha}_p,\quad \boldsymbol{\alpha}_e = \boldsymbol{\alpha}_q\quad \mb{A}_{oe} = \mb{A}^*
\end{align}
Using the above relations in our general OBCs given in \eqref{Onsager boundary conditiones inhomo}, we obtain the following set of OBCs for our moment system 
\begin{align}
\boldsymbol{\alpha}_o = \mb{L}\mb{A}^{oe}\boldsymbol{\alpha}_e + 2\beta\mb{g}	\label{OBC moment system}
\end{align}
where $\mb{L}\in \mbb{R}^{n_o\times n_o}$ is an unknown symmetric positive semi-definite matrix and $\mb{g}$ is as defined in \eqref{def g}. Before considering the explicit expression for $\mb{L}$, it would be helpful to consider the following decomposition for $\mb{M}^{(mbc)}$
\begin{align}
\mb{M}^{(mbc)} =\left(\hat{\mb{M}}^{(mbc)}, \tilde{\mb{M}}^{(mbc)}\right)	\label{split Bmbc}
\end{align} 
where $\hat{\mb{M}}^{(mbc)}\in \mbb{R}^{n_o\times n_o}$ and $ \tilde{\mb{M}}^{(mbc)}\in \mbb{R}^{n_o\times \left(n_e-n_o\right)}$. Adopting the model for the Onsager matrix proposed in \cite{Sarna2017}, we have the following explicit expression for $\mb{L}$
\begin{align}
\mb{L} = 2\beta\hat{\mb{M}}^{(mbc)}\left(\hat{\mb{A}}^{oe}\right)^{-1}. \label{model R}
\end{align}

From the above model we can see that the invertibility of $\left(\hat{\mb{A}}^{oe}\right)$ is crucial if we wish to extend the framework developed in \cite{Sarna2017} for multi-dimensional problems. In the present work we will not be proving that an Onsager matrix given by \eqref{model R} will be symmetric positive semi-definite. But through a numerical study, for $G10$ to $G148$, we have found that even for multi-dimensional moment systems, the Onsager matrix $\mb{L}$ given by \eqref{model R} is symmetric positive semi-definite. 
With an explicit expression for the Onsager matrix, a set of stable boundary conditions for a general wall, with $\mb{n}$, $\mb{t}$ and $\mb{r}$ spanning it's local coordinate system, is given as
\begin{align}
\boldsymbol{\alpha}_o^{(n,t,r)} = \mb{L}\mb{A}^{oe}\boldsymbol{\alpha}_e^{(n,t,r)} + 2\beta\mb{g}	\label{OBC moment system general wall}
\end{align}
We can now analyse the stability of our OBCs given in \eqref{OBC moment system} through the following way. Using the structure of $\mb{M}^{(mbc)}$ given in \eqref{structure MBC}, we find that our Onsager matrix given in \eqref{model R} will have the following structure 
\begin{align}
\mb{L} = \left( \begin{matrix} \mb{0} & \mb{0} \\
						\mb{0} & \td {\mb{L}} \end{matrix} \right).\label{structure MBC}
\end{align}
Using \eqref{structure g} and the structure of our moment system, we find that the vector $\mb{g}$ and the matrix $\mb{A}_{oe}$ will have the following form
\begin{align}
\mb{g} = \left( \begin{matrix} \mb{0}\\
					\td{\mb{g}} \end{matrix} \right),\quad \mb{A}_{oe} =	 \left( \begin{matrix} 1 & \dots\\
						\mb{0} & \mb{A}^{\dagger}_{oe} \end{matrix} \right)	\label{structure g and Aoe}
\end{align}
A similar structure for $\mb{L}$, $\mb{g}$ and $\mb{A}_{oe}$ was also recognised in \cite{Sarna2017} and similar to the one studied in \eqref{onsager boundary conditions}; therefore the OBCs given in \eqref{OBC moment system general wall} along with the Onsager matrix given in \eqref{structure MBC} provides us with a stable set of boundary conditions.
\section{Poisson Heat Conduction}
Having formulated a stable set of boundary conditions for our moment system, we would now like to compare the physical accuracy provided by the newly proposed OBCs with respect to the MBCs. To achieve this, we will revisit the steady state Poisson heat conduction problem studied in \cite{Torrilhon2015} where the author has discussed the convergence behaviour of higher order moment methods for boundary value problems using MBCs. Moving along the same lines as in \cite{Torrilhon2015}, we will approximate our linearised collision operator, $\td Q\td f$, appearing in the linearised Boltzmann's equation \eqref{BE linear} through the BGK model which is given as 
\begin{align}
\td Q (\td f) = -\frac{1}{\tau}\left(\td f-\td f_{\mcal{M}}\right)
\end{align}
where $\tau$ represents the relaxation time scale and is the inverse of the collision frequency. 
To study the Poisson heat conduction problem, we will consider a channel which extends infinitely in the $x$-direction. So, all the field variable will only vary along the $y$-direction. The channel will be considered to be symmetric about the $x$-axis such that $y \in [-\frac{L}{2},\frac{L}{2}]$. The flow will be characterised by the Knudsen number $Kn$ which is given as
\begin{align}
Kn = \frac{\tau}{\sqrt{\theta_0}L}
\end{align}
Additionally, we will introduce a source term $F(\mb{x})$ on the right hand side of the linearised Boltzmann's equation \eqref{BE linear}. The forcing term, $F(\mb{x})$, will be such that it only influences the energy equation (or the equation for $\alpha^{(1)}$); therefore we will consider $F(\mb{x})$ to be given by 
\begin{align}
F(\boldsymbol{\xi},\mb{x}) = -\sqrt{\frac{2}{3}} \frac{r(\mb{x})}{\rho_0\theta_0}\psi^{(1)}f_0(\boldsymbol{\xi})
\end{align}
where $r(\mb{x})$ is some function of $\mb{x}$. If we consider the above form for the source term, then our energy equation reads (in steady state)
\begin{align}
\rho_0\theta_0\pd_{x_i}\td v_i + \pd_{x_i}\td q_i = r	\label{energy equation Poisson}
\end{align}
All the other equations see no influence from $F$ due to the orthogonality of the basis functions given in \eqref{orthogonality psi}. For a detailed discussion on the Poisson heat conduction problem see \cite{Torrilhon2015}. Since we choose to drive our system with the help of an external force so we will be considering both the walls of the channel to be at the same temperature and stationary. 

\subsection{Problem Setup}
In the present work, we will consider the following functional form for our source term $r(\mb{x})$ appearing the energy equation
\begin{align}
r(y) = \alpha y^2
\end{align}
where $\alpha = \sqrt{\frac{2}{3}}$. Since we have considered a forcing term which is symmetric with respect to the $x$-axis so all the field variables will be symmetric about the same. We will scale the $y$-coordinate with $L$ and will use appropriate powers of $\rho_0$ and $\theta_0$ to scale all the other macroscopic quantities like velocity, stress tensor, heat flux etc. In order to fully define our boundary conditions, we will need the attributes of the wall which are given as
\begin{subequations}
\begin{align}
\alpha_w^{(1)}\rvert_{y = -\frac{1}{2}} = \alpha_w^{(1)}\rvert_{y = \frac{1}{2}} = -\sqrt{\frac{3}{2}},\\ \alpha_x^{w}\rvert_{y = -\frac{1}{2}} = \alpha_x^{w}\rvert_{y = \frac{1}{2}} = 0
\end{align}
\end{subequations}
To maintain consistency with the work done in \cite{Torrilhon2015}, we will be considering $Kn=0.3$. The rarefaction effects becomes important for $Kn \geq 0.05$; this is the regime where the classical Navier-Stokes equations fail to provide us with an acceptable solution. These rarefaction effects include a temperature jump condition at the wall, a non-trivial stress-tensor etc. Since the moment systems have been found to have an oscillatory convergence behaviour for boundary value problems so similar to \cite{Torrilhon2017} we will be using the averaged solution of $G56$, $G84$ and $G120$ moment equations to study the Poisson heat conduction problem. For details regarding the reference solution see \cite{Torrilhon2015}.

\subsection{Variation of field variables}
In \autoref{field variation}, we have shown the variation of $\td{\theta}$ and $\td{\sigma}_{yy}$ along with the variation of $e_{\theta}$ and $e_{\sigma}$ which are defined as 
\begin{align}
e_{\theta}(y) = |\td\theta-\theta_{ref}|,\quad e_{\sigma}(y) = |\td\sigma_{yy}-\sigma_{yy}^{(ref)}|
\end{align}
where $\theta_{ref}$ and $\sigma_{yy}^{(ref)}$ represent the reference $\td{\theta}$ and $\td{\sigma}_{yy}$ respectively. 
Let us first look into the variation of $\td{\theta}$. 
As one would expect from this particular flow regime, we see a temperature jump at the wall while using both OBCs and MBCs. 
Considering the physical accuracy, we can see that a few mean free paths away from the wall, the solution obtained through OBCs provides us with much more accurate results as compared to the MBCs. As we move closer to the wall, both OBCs and MBCs fail to  capture the sharp boundary layer; though the results obtained from OBCs appear to be qualitatively more appropriate. This shows us that in order to capture the boundary layer more precisely one needs to consider even higher order moment methods. 
 A similar observation, in relation to R13 equations, was also made in \cite{Rana2016}. 

\begin{figure}[ht!]
\centering
\subfigure [Variation of $\td\theta$ for $Kn=0.3$]{
\includegraphics[width=3in]{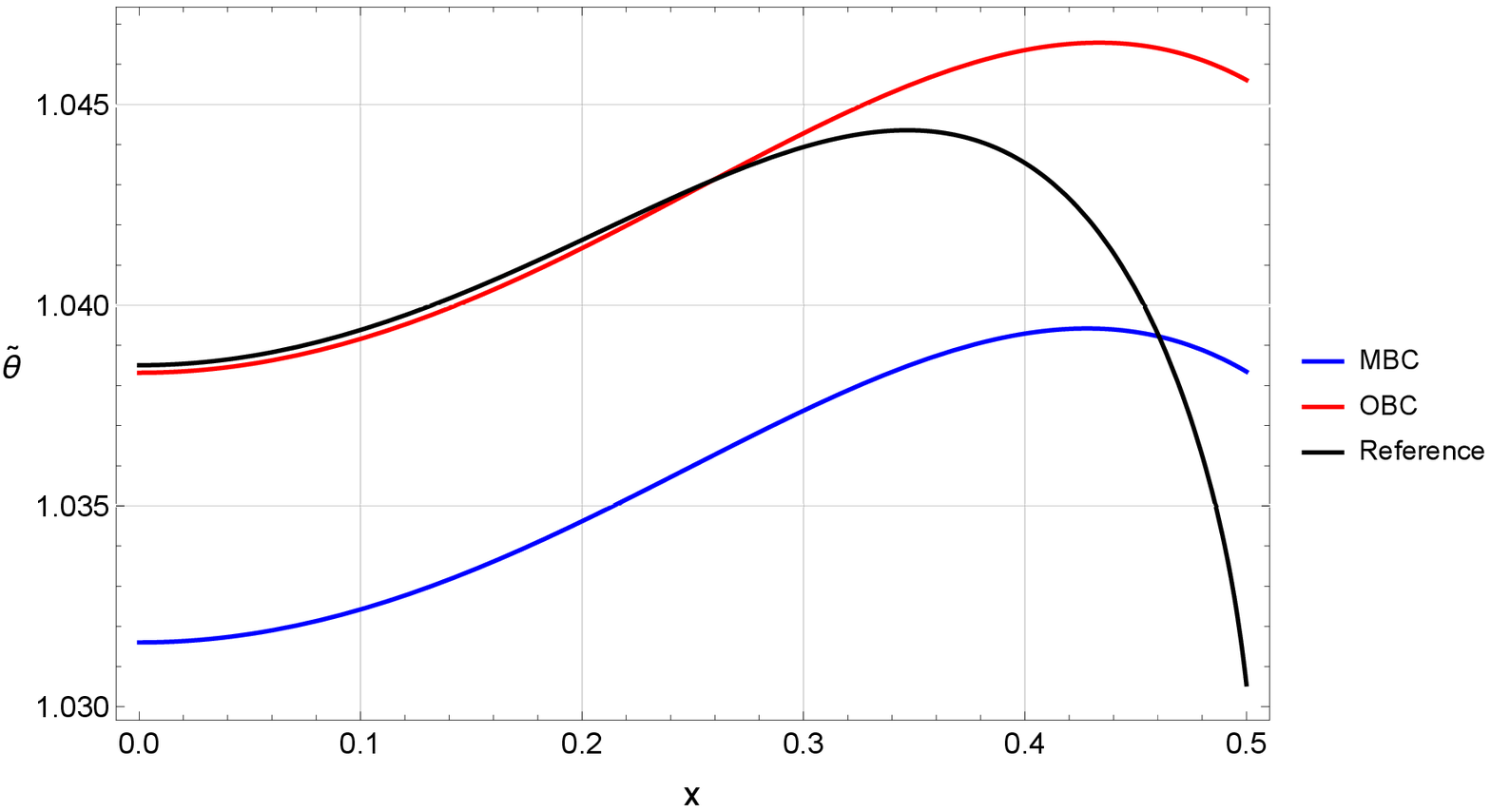} }
\hfill
\subfigure [Variation of $\td\sigma_{yy}$ for $Kn=0.3$]{
\includegraphics[width=3in]{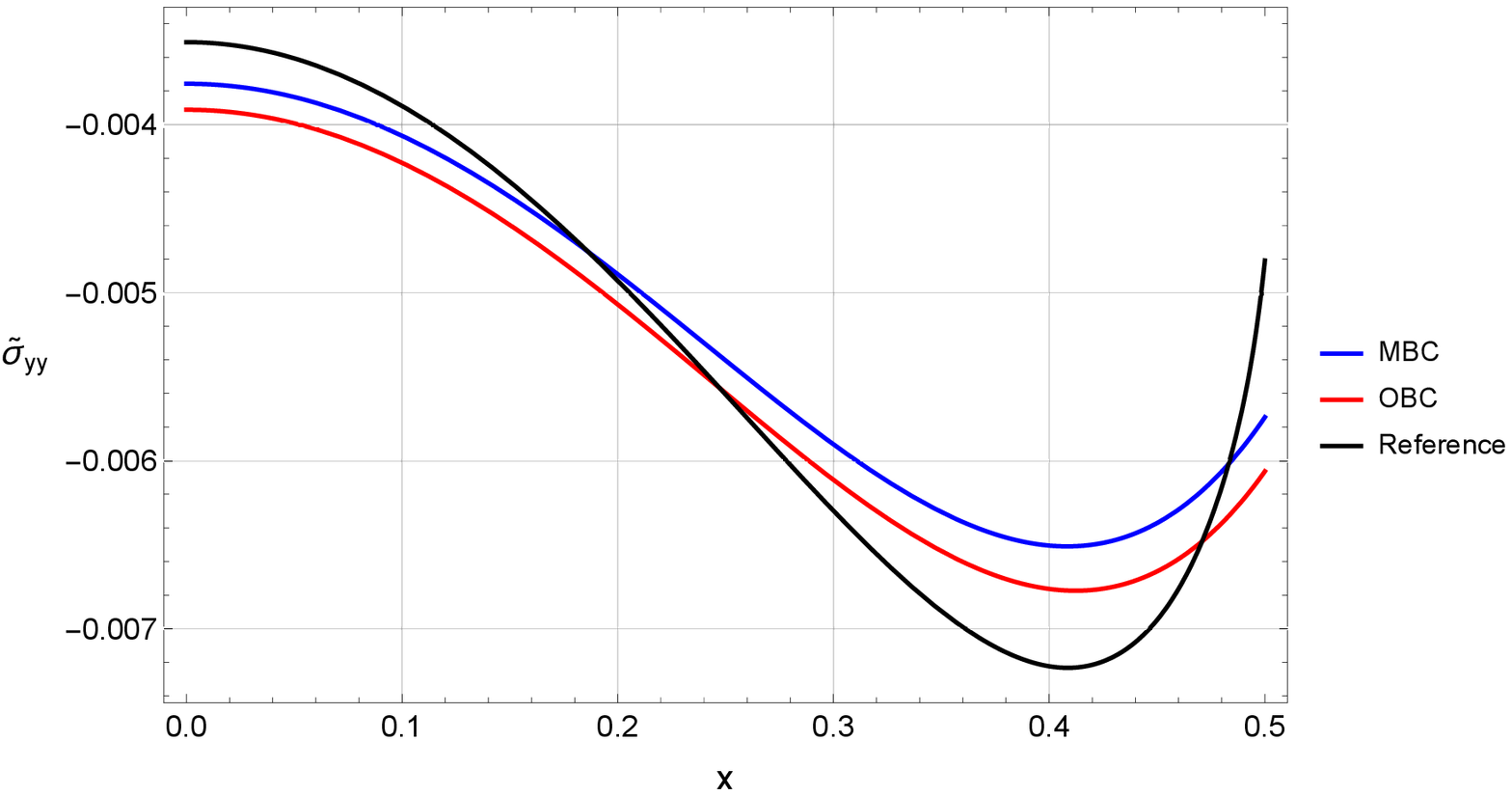} }
\hfill
\subfigure [Variation of error in $\td\theta$ for $Kn=0.3$]{
\includegraphics[width=3in]{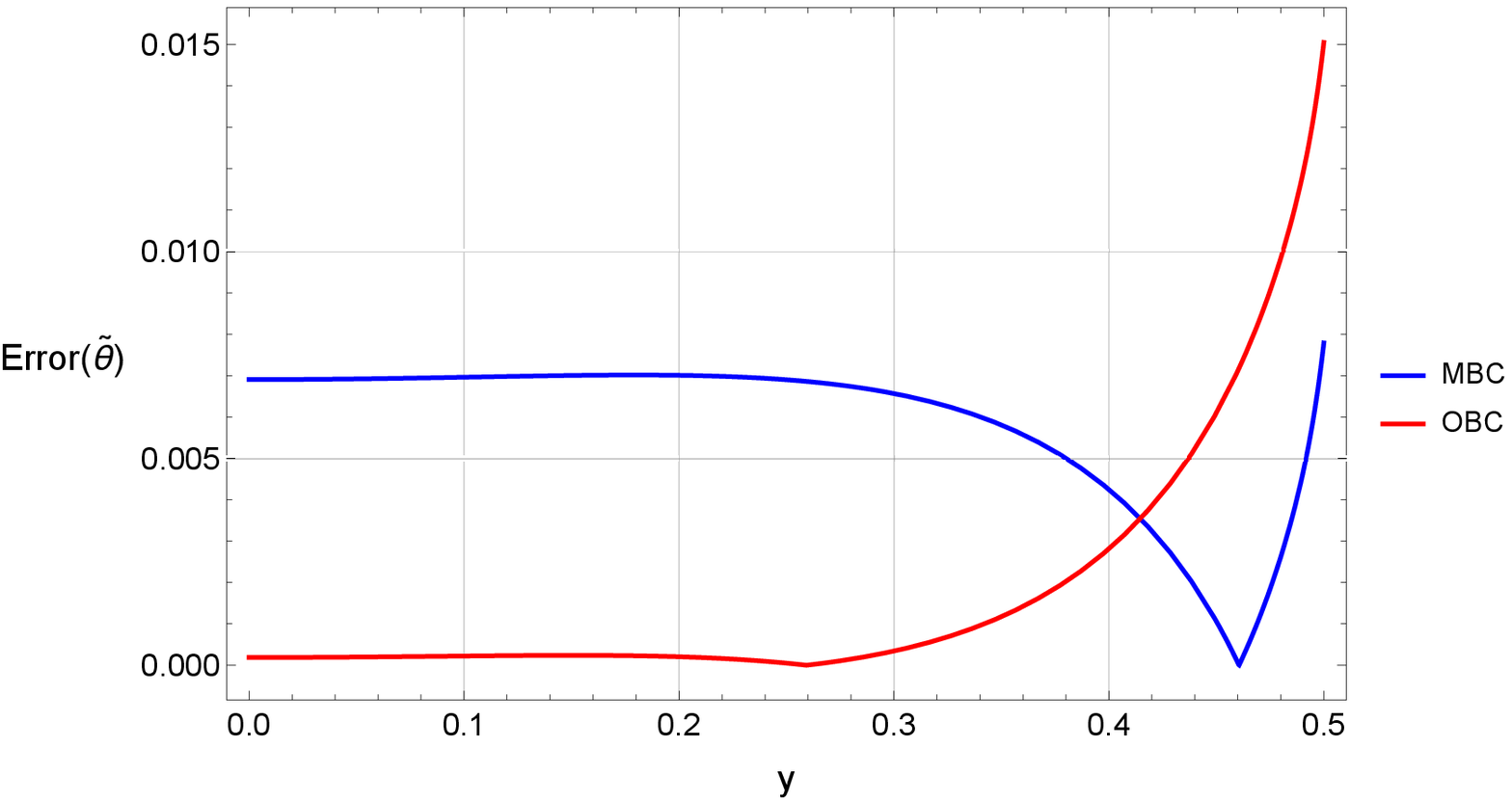} }
\hfill
\subfigure [Variation of error in $\td\sigma_{yy}$ for $Kn=0.3$]{
\includegraphics[width=3in]{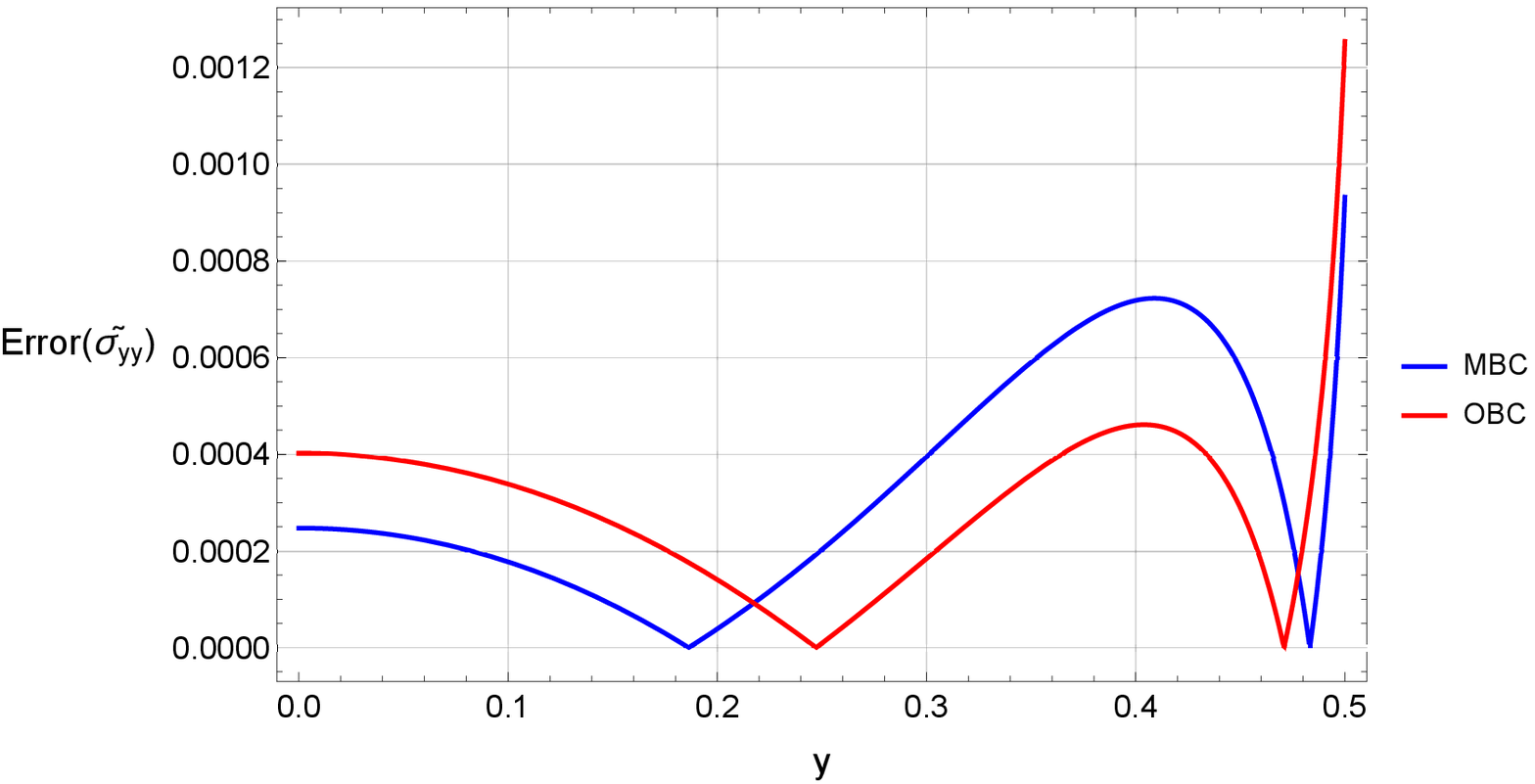} }
\hfill
\caption{Variation of the field variables $\td\theta$ and $\td\sigma_{yy}$ and their corresponding error for MBCs and OBCs using the averaged solution of $G56$, $G84$ and $G120$ moment equations }	\label{field variation}
\end{figure} 

We can now consider the variation of  $\td{\sigma}_{yy}$. Similar to the temperature jump effects seen in the variation of $\td{\theta}$, we see a non-trivial $\td{\sigma}_{yy}$ in the channel which is a well known rarefaction effect. Contrary to the variation of $\td{\theta}$, the results obtained for $\td{\sigma}_{yy}$, using MBCs or OBCs are very similar qualitatively. The variation of error for $\td{\sigma}_{yy}$ follows a sporadic behaviour. Near the central axis of the channel, the results obtained from MBCs are more accurate but as we move closer to the wall they become less accurate as compared to OBCs only to become more accurate very close to the wall. Similar to $\td{\theta}$, the variation of $\td{\sigma}_{yy}$ shows us that we need to consider more moments in order to capture the boundary layer accurately. 

\section{Conclusion}
We have used the symmetric hyperbolicity and the rotational invariance of the linear moment systems to come up with stable boundary conditions for the same. The stable boundary conditions were formulated in terms of an unknown Onsager matrix $\mb{L}$ which was then defined using the model presented in \cite{Sarna2017}. In order to extend the model for the Onsager matrix presented in \cite{Sarna2017}, to the multi-dimensional case, we have made certain assumption on the properties of the flux matrices. The assumptions on these properties were found to hold true even for very large moment systems and therefore the framework presented in this work is not restricted to only certain moment systems. Using the properties of the Onsager matrix and the flux matrices, the boundary conditions were shown to be stable even for the inhomogeneous case. To compare the physical accuracy of the MBCs and the OBCs we have revisited the Poisson heat conduction problem studied in \cite{Torrihon2015}. For this particular test case, both MBCs and OBCs were found to be inaccurate very close to the boundary of the domain but the OBCs were found to be more accurate in the bulk region. Any realistic flow computation is a combination of various flow phenomenons one of which is heat conduction therefore  a definitive answer regarding physically accuracy of the OBCs could not be made only by the analysis done in the present work.

\bibliographystyle{apa}
\bibliography{paper}

\end{document}